\newtheorem{theorem}{Theorem}[section]
\newtheorem{lemma}[theorem]{Lemma}
\newtheorem{proposition}{Proposition}
\newtheorem{corollary}{Corollary}
\theoremstyle{definition}
\newtheorem{example}{Example}
\newtheorem{remark}{Remark}
\numberwithin{equation}{section}
\begin{document}

\title[Stability under deformations of Hermite-Einstein metrics]{ Stability under deformations of Hermite-Einstein almost K\"ahler metrics}

\author{{Mehdi Lejmi}}
\thanks{The work of the author was supported by FQRNT grant.}
\address{ School of Mathematics\\
University of Minnesota \\ Minneapolis (Minnesota) \\
55455 \\ USA} \email{mlejmi@umn.edu}

\maketitle

\medskip

\selectlanguage{english}
\begin{abstract}
On a $4$-dimensional compact symplectic manifold, we consider a smooth family of compatible almost-complex structures such that at time zero the induced metric is Hermite-Einstein almost-K\"ahler metric with zero or negative Hermitian scalar curvature.
We prove, under certain hypothesis, the existence of a smooth family of compatible almost-complex structures, diffeomorphic at each time to the initial one, and inducing constant Hermitian scalar curvature metrics. 
\end{abstract}
\section{Introduction}
On a $2n$-dimensional symplectic manifold $(M,\omega)$, an almost-complex structure $J$ is $\omega$-compatible if it induces a Riemannian metric $g$ via the relation $g(\cdot,\cdot)=\omega(\cdot,J\cdot)$. The metric $g$ is called then an almost-K\"ahler metric. When $J$ is integrable, the induced metric is K\"ahler.
Given an $\omega$-compatible almost-complex structure $J$, there exists a canonical Hermitian connection with torsion $\nabla$ \cite{gau1, lib} on the tangent bundle, which preserves both $\omega$ and $J$. The curvature of the induced Hermitian connection on the anti-canonical bundle is of the form $\sqrt{-1}\rho^\nabla$, where $\rho^\nabla$ is a closed real $2$-form called the Hermitian Ricci form. The Hermitian scalar
curvature $s^\nabla$ is defined as the contraction of $\rho^\nabla$ by $\omega$ and coincides with the (usual) Riemannian scalar curvature when the metric is K\"ahler. An almost-K\"ahler metric is
called Hermite-Einstein \cite{lej} (HEAK for short) if the Hermitian Ricci form $\rho^{\nabla}$ satisfies $\rho^{\nabla}=\frac{s^\nabla}{2n}\omega$ (in particular $s^\nabla$ is a constant). Note that the terminology `Hermite-Einstein' here {\it does not imply the integrability} of the almost-complex structure.

On a compact symplectic manifold $(M,\omega)$, we consider the space $AK_\omega$ of all $\omega$-compatible almost-complex structures. This is an infinite dimensional Fr\'echet space equipped with a formal K\"ahler structure described by Fujiki \cite{don,fuj}. Furthermore, there is a natural action of the Hamiltonian group $Ham(M,\omega)$ on $AK_\omega$ and it turns out that this action is Hamiltonian \cite{don,fuj}
with moment map identified with the Hermitian scalar curvature. A metric induced by a critical point of the square norm of the moment map $J\mapsto \int_M(s^\nabla)^2{\omega^n}$ is called an extremal almost-K\"ahler metric \cite{apo-dra,lej,lej1}. Moreover, an almost-K\"ahler metric induced by $J$ is extremal if and only if the symplectic gradient of its Hermitian scalar curvature is an infinitesimal isometry of $J$. Extremal almost-K\"ahler metrics are a generalization of Calabi extremal K\"ahler metrics \cite{cal}.
Furthermore, almost-K\"ahler metrics with constant Hermitian scalar curvature are extremal. 

In the K\"ahler setting, Fujiki--Schumacher \cite{fuj-sch} and Lebrun--Simanca \cite{leb-sim} showed, in the abscence of holomorphic vector fields, that the existence of extremal K\"ahler metrics is an open condition. Moreover, Apostolov--Calderbank--Gauduchon--Friedman \cite{apo-cal-gau-fri 2} proved the openess by fixing a maximal torus in the reduced automorphism group of the complex manifold $(M,J)$. Furthermore, Rollin--Simanca--Tipler \cite{rol-sim-tip}
showed with a certain hypothesis the stability of extremal K\"ahler metrics under complex deformations and hence generalized the results of \cite{leb-sim,leb-sim-1} (see also \cite{rol-tip,sze}).

In the general almost-K\"ahler case, one expects, from the GIT standard picture \cite{don,fog-kir-mum},
the existence and uniqueness of extremal almost-K\"ahler metrics, up to the action of $Ham(M,\omega)$, in every `stable complexified' orbit of the action of $Ham(M,\omega)$. The complexification of $Ham(M,\omega)$ does not exist. However, one can complexify the action on the level of the Lie algebra and once we are restricted to the integrable $\omega$-compatible almost-complex structures, a description of this complexified orbit is given when $H^1(M,\mathbb{R})=0$ \cite{don}. It is identified with the space of K\"ahler forms in the cohomology class of $\omega$.

In a previous paper \cite{lej1}, on a compact symplectic $4$-manifold $(M,\omega)$, we considered a smooth path of $\omega$-compatible almost-complex structures $J_t $ invariant under a (fixed) maximal torus $T$ in $Ham(M,\omega)$ such that $J_0$ induces an extremal K\"ahler metric. In particular, $J_0$ {\it is integrable}. Furthermore, we supposed that $h^-_{J_t}=b^+-1$ for sufficiently small $t$, where $h^-_{J_t}$ is the dimension of $g_t$-harmonic $J_t$-anti-invariant $2$-forms \cite{dra-li-zha} (here $g_t$ is the metric induced by $J_t$). Then, we showed, for a short time, the existence of smooth
family of $T$-invariant $\omega$-compatible almost-complex structures $\tilde{J_t}$ inducing extremal almost-K\"ahler metrics such that $\tilde{J}_0=J_0$ and $\tilde{J}_t$ is diffeomorphic to $J_t$ for each $t$. In the spirit of Lebrun--Simanca result \cite{leb-sim}, the proof consists mainly to deform the symplectic form by introducing a notion of almost-K\"ahler potential (defined only in dimension $4$) and then using the Banach implicit function theorem for the Hermitian scalar curvature map. The hypothesis $h^-_{J_t}=b^+-1$ was necessarily to insure the continuity of the Hermitian scalar curvature map since a family of Green operators is involved in the definition of this almost-K\"ahler potential. A recent result of Tan--Wang--Zhang--Zhu \cite{tan-wan-zha-zhu}
implies that one can drop the assumption $h^-_{J_t}=b^+-1$.

Now, if we suppose that $J_0$ {\it is not integrable}, it is not clear how to identify the kernel of the linearization of the Hermitian scalar curvature map with the Lie algebra of Hamiltonian Killing vector fields even in the simplest case namely when $J_0$ induces a HEAK metric.
The idea in this paper is to define another suitable almost-K\"ahler potential for which it is possible to study the kernel of the derivative of the Hermitian scalar curvature map at least in the latter case.
The almost-K\"ahler potential defined in this paper follows from a generalization of the $dd^c$-Lemma \cite{del-gri-mor-sul} to the almost-K\"ahler case. For instance, one can derive a Hodge decomposition of the Riemannian dual of a (real) holomorphic vector field
on a compact almost-K\"ahler manifold. When $J_0$ induces a HEAK metric with zero or negative Hermitian scalar curvature, we obtain the following


%


\begin{theorem}\label{th2}
Let $(M,\omega)$ be a $4$-dimensional  compact symplectic manifold.
Let $J_t$ be any smooth family of $\omega$-compatible almost-complex structures such that $J_0$ induces a HEAK metric with zero or negative Hermitian scalar curvature. Moreover, suppose that  for a small $t$, $h^-_{J_t}=h^-_{J_0}=b^+-1.$
Then, there exists a smooth family of $\omega$-compatible almost-complex structures  ${\tilde{J}}_t$, defined for small $t$, inducing almost-K\"ahler metrics with constant Hermitian scalar curvature such that
$\tilde{J}_t$ is diffeomorphic to $J_t$ for each $t$ and $\tilde{J}_0=J_0$.
\end{theorem}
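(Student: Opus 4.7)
The plan is to adapt the Lebrun--Simanca strategy to the non-integrable almost-K\"ahler setting. By a Moser-type argument applied to the family $J_t$, one can pull back by diffeomorphisms so that, after replacing $J_t$ by a diffeomorphic equivalent, all $J_t$ are $\omega$-compatible. The problem then reduces to producing, for each small $t$, an element in the diffeomorphism orbit of $J_t$ inside $AK_\omega$ whose induced metric has constant Hermitian scalar curvature, depending smoothly on $t$ and coinciding with $J_0$ at $t=0$.

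The main new ingredient is a notion of almost-K\"ahler potential tailored to this problem. I would first establish a generalization of the classical $dd^c$-lemma to almost-K\"ahler $4$-manifolds, as announced in the introduction; under the topological hypothesis $h^-_{J_t}=h^-_{J_0}=b^+-1$ the relevant Green operators depend continuously on $t$. This yields a map $\varphi\mapsto J_t(\varphi)$ from a neighbourhood of $0$ in $C^\infty(M,\mathbb{R})$ into $AK_\omega$ parametrizing a smooth slice transverse to the $Ham(M,\omega)$-action, with $J_t(0)$ diffeomorphic to $J_t$. One is then led to study the map
\[
\Phi(t,\varphi):=s^{\nabla}\bigl(J_t(\varphi)\bigr)-\overline{s^{\nabla}}\bigl(J_t(\varphi)\bigr),
\]
whose zeros are precisely the pairs producing constant Hermitian scalar curvature metrics in the diffeomorphism orbit of $J_t$.

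The decisive analytical step is to show that $L:=D_\varphi\Phi(0,0)$ is a fourth-order self-adjoint elliptic operator with trivial kernel modulo constants. Using the generalized $dd^c$-lemma together with the Hodge decomposition of the Riemannian dual of a real holomorphic vector field, one identifies $\ker L$ with the space of real holomorphy potentials on $(M,\omega,J_0)$. A Bochner--Weitzenb\"ock argument on the Hamiltonian vector field attached to any such potential, exploiting the HEAK identity at $t=0$ together with $s^\nabla\leq 0$, forces the potential to be constant. This is the exact almost-K\"ahler counterpart of the ``no non-trivial holomorphy potentials'' hypothesis in \cite{leb-sim}, and it is the step I expect to be the main obstacle: both the construction of a workable potential and the Lichnerowicz-type identification of $\ker L$ break down in the non-integrable setting without the new $dd^c$-lemma, so a non-trivial amount of almost-K\"ahler Hodge theory must be developed before the Bochner step becomes available.

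Once $L$ is shown to be Fredholm and surjective onto the mean-zero subspace, the Banach implicit function theorem, applied in suitable Sobolev or H\"older completions of $\Phi$, delivers a smooth family of potentials $\varphi_t$ with $\varphi_0=0$ solving $\Phi(t,\varphi_t)=0$. Composing the construction with the auxiliary Moser diffeomorphisms yields a smooth family $\tilde{J}_t$ of $\omega$-compatible almost-complex structures diffeomorphic to $J_t$, equal to $J_0$ at $t=0$, and inducing constant Hermitian scalar curvature almost-K\"ahler metrics, as required.
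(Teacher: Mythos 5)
Your overall architecture (deform by a potential adapted to the generalized $dd^c$-lemma, apply the implicit function theorem in Sobolev completions, finish with Moser) matches the paper, but the decisive analytic step is not carried out, and the route you sketch for it is exactly the one that breaks down here. You propose that $L=D_\varphi\Phi(0,0)$ is a fourth-order operator whose kernel is identified, via a Lichnerowicz-type argument, with real holomorphy potentials, which are then killed by a Bochner argument using $s^\nabla\leq 0$. When $J_0$ is \emph{not} integrable there is no known identification of the kernel of the linearized Hermitian scalar curvature map with holomorphy (Killing) potentials — this is precisely the obstruction the paper is built to avoid, and you offer no proof of it, only the expectation that "a non-trivial amount of almost-K\"ahler Hodge theory" would yield it. The paper sidesteps this entirely by choosing the potential differently: it keeps $J_t$ fixed and deforms the symplectic form as $\omega_{t,f}=\omega+d\mathbb{G}_tJ_tdf$, where the Green operator absorbs two derivatives. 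With this choice the linearization at $(0,0)$ is the \emph{second-order} operator
\begin{equation*}
\dot f\longmapsto -\Delta^{g_0}\dot f+\frac{s^{\nabla}}{2}\dot f,
\end{equation*}
whose injectivity on zero-mean functions is immediate from $s^\nabla\leq 0$ by integration by parts; no holomorphy potentials, no Bochner formula, no Lichnerowicz identification are needed. Without this (or some substitute argument you would have to supply), your proof of the key kernel statement is a genuine gap, not merely a technical one.

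Two further inaccuracies: the hypothesis $h^-_{J_t}=h^-_{J_0}=b^+-1$ is not used to make the Green operators depend continuously on $t$ (that follows from the constancy of the dimension of the space of harmonic forms and Kodaira's theorem); its role is to guarantee, via Proposition \ref{p3}, that the deformed forms $d\mathbb{G}_tJ_tdf$ are $J_t$-invariant, so that $(\omega_{t,f},J_t)$ is actually an almost-K\"ahler pair — without this the deformation space you parametrize does not stay inside compatible pairs. Also, your opening Moser-type step to make the $J_t$ $\omega$-compatible is superfluous: compatibility is part of the hypothesis, and Moser is needed only at the end, to pull the solutions $(\omega_{t,f_t},J_t)$ back to the fixed form $\omega$ and produce $\tilde J_t$ diffeomorphic to $J_t$.
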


We note that, in the above theorem, the condition $h^-_{J_t}=h^-_{J_0}=b^+-1$ is not to ensure the continuity of the Hermitian scalar curvature map but to guarantee the $J_t$-invariance of the constructed symplectic forms. Moreover, by \cite{tan-wan-zha-zhu}, one has only to suppose that $h^-_{J_0}=b^+-1$.
The latter condition is satisfied in the cases mentionned in \cite{lej1}. Moreover, by a result of Li and Tomassini \cite{li-tom}, any homogeneous almost-K\"ahler structure $(\omega,J)$ on a $4$-dimensional compact manifold $M=G/\Gamma$,
where $G$ is a simply-connected Lie group and $\Gamma\subset G$ a uniform discrete subgroup, verifies $h^-_{J}=b^+-1.$ Namely, the Kodaira--Thurston manifold has non-integrable almost-K\"ahler
metrics $(\omega,J)$ with vanishing Hermitian Ricci form satisfying the condition $h^-_{J}=b^+-1$ \cite{dra,li, vez}.

%
%

%

%

\section{Preliminaries}
Let $(M,\omega)$ be a symplectic manifold of dimension $2n$. An almost-complex structure $J$ is $\omega$-compatible if the induced $2$-tensor field $g(\cdot,\cdot):=\omega(\cdot,J\cdot)$ is a Riemannian metric. Then, $g$ is called an ($\omega$-compatible) almost-K\"ahler metric. In the rest of the paper, we identify the induced metric $g$ with the couple $(\omega,J)$. If, additionally, $J$ is {{integrable}}, then $(\omega,J)$ is a {K\"ahler} metric.

The almost-complex structure $J$ acts on the cotangent bundle $T^\ast M$ by $J\alpha(X)=-\alpha(JX),$ where $\alpha$ is a $1$-form and $X$ a vector field on $M$. The action of $J$ can be extend to any $p$-form $\psi$
by $(J\psi)(X_1,\cdots,X_p)=(-1)^p\psi(JX_1,\cdots,JX_p)$. 
The bundle of $2$-forms $\Lambda^2M$ decomposes under the action of $J$ as follows
\begin{equation}\label{eq13}
\Lambda^2M=\mathbb{R}\,.\,\omega\oplus\Lambda^{J,-}M\oplus\Lambda_0^{J,+}M,
\end{equation}
where $\Lambda^{J,-}M$ is the subbundle of $J$-anti-invariant $2$-forms and $\Lambda_0^{J,+}M$ is the subbundle of $J$-invariant $2$-forms pointwise orthogonal to $\omega.$ 

For an almost-K\"ahler metric $(\omega,J)$, the Hermitian connection $\nabla$ on $(TM,\omega,J)$ is defined by
\begin{equation*}
\nabla_XY=D^g_XY-\frac{1}{2}J\left(D^g_XJ\right)Y,
\end{equation*}
where $D^g$ is the Levi-Civita connection with respect to the induced metric $g$ and $X,Y$ are vector fields on $M$. Let $R^{\nabla}$ be the curvature of $\nabla$. Then, the {\it Hermitian Ricci form} $\rho^{\nabla}$ is defined by
\begin{equation*}
\rho^{\nabla}(X,Y)=-tr(J\circ R^{\nabla}_{X,Y}),
\end{equation*}
where $R^{\nabla}_{X,Y}$ is viewed as an anti-Hermitian linear operator of $(TM,\omega,J)$. The form $\rho^{\nabla}$ is a deRham representative of $2\pi c_1(TM,J)$ in $H^2(M,\mathbb{R})$, where $c_1(TM,J)$ is the first (real) Chern class. 
If we suppose that $\omega$ and $\tilde{\omega}$ are symplectic forms compatible with the same almost-complex structure $J$ and satisfy $\tilde{\omega}=e^F\omega^n$ for some real-valued function $F$ then
\begin{equation}\label{eq23}
\tilde{\rho}^\nabla=-\frac{1}{2}{dJdF}+\rho^\nabla,
\end{equation}
where $\tilde{\rho}^\nabla$ (resp. ${\rho}^\nabla$) is the Hermitian Ricci form of $(\tilde{\omega},J)$ (resp. $({\omega},J)$).

We define the {\it Hermitian scalar curvature} $s^{\nabla}$ of an almost-K\"{a}hler metric $(\omega,J)$ as the trace of $\rho^\nabla$ with respect to $\omega$, i.e.
\begin{equation}\label{eq19}
s^{\nabla}\omega^n=2n\left(\rho^{\nabla}\wedge\omega^{n-1}\right).
\end{equation}

An almost K\"ahler metric $(\omega,J)$ is called {\it Hermite-Einstein} (HEAK for short) if $$\rho^{\nabla}=\frac{s^\nabla}{2n}\omega.$$ In particular, $s^\nabla$ is a constant.


The Riemannian Hodge operator $\ast_g:\Lambda^pM\rightarrow\Lambda^{2n-p}M$ is defined to be the unique isomorphism such that $\psi_1\wedge(\ast_g)\psi_2=g(\psi_1,\psi_2)\frac{\omega^n}{n!},$ for any $p$-forms $\psi_1$ and $\psi_2$.
Moreover, since the dimension of $M$ is even, $(\ast_g)^2\psi=(-1)^p\psi$ on $p$-form $\psi$.
In dimension $4$, the bundle of $2$-forms decomposes as
\begin{equation*}
\Lambda^2M=\Lambda^+M\oplus\Lambda^+M,
\end{equation*}
where $\Lambda^\pm M$ corresponds to the eignevalue $(\pm 1)$ under the action of the Riemannian Hodge operator $\ast_g$. This decomposition is related to the splitting \ref{eq13} in the following way
\begin{equation*}
\Lambda^+M=\mathbb{R}\,\omega\oplus\Lambda^{J,-}M \,\,\,\,\mbox{   and   }\,\,\,\Lambda^-M=\Lambda_0^{J,+}M.
\end{equation*} 

\section{Generalized ${dd^c}$-Lemma}

In this section, we generalize the $dd^c$-Lemma \cite{del-gri-mor-sul} to the almost-K\"ahler case. For this purpose, we need to present some symplectic commutators.

Let $(M,\omega,J,g)$ be a compact almost-K\"ahler manifold of dimension $2n$. Let $\delta^g$ be the {\it codifferential} defined as the formal adjoint of the Levi-Civita connection $D^g$ with respect to the almost-K\"ahler metric $g$ when it is applied to sections of $\otimes^pT^\ast M$. In particular, it is the
adjoint of the exterior derivative $d$ when it is applied to $p$-forms and are related by $\delta^g=-\ast_gd\,\,\ast_g$ since the dimension of $M$ is even. Denote by $\Delta^g=d\delta^g+\delta^gd$ the Laplacian and $\mathbb{G}$ the Green operator associated to $\Delta^g$. The Riemannian Hodge operator $\ast_g$ commutes with $\Delta^g$. It follows that $\ast_g$ commutes with $\mathbb{G}$. 

Let $\delta^c=(-1)^pJ\delta^gJ$ be the {\it twisted codifferential} acting on $p$-forms. This is the {\it symplectic adjoint} of $d$. Define the {\it twisted differential} $d^c$ by $d^c=(-1)^pJdJ$ acting on $p$-forms and let $\Delta^c=d^c\delta^c+\delta^cd^c$ be the {\it twisted Laplacian} and $\mathbb{G}^c$ the Green operator associated to $\Delta^c.$ One can prove in elementary way that the codifferential $\delta^g$ and the exterior derivative $d$ (resp. $\delta^c$ and $d^c$) commute with $\mathbb{G}$ (resp. $\mathbb{G}^c$).

%

We denote by $\Lambda_\omega$ the contraction by the symplectic form $\omega$ defined for a $p$-form $\psi$ by
$\Lambda_\omega(\psi)=\frac{1}{2}\sum_{i=1}^{2n}\psi(e_i,Je_i,\cdots)$, where $\{e_i\}$ is a local $J$-adapted orthonormal frame. The commutator of $\Lambda_\omega$ and $d^c$ is given by \cite{gau,mer}
\begin{equation}\label{eq20}
[\Lambda_\omega,d^c]=\delta^g.
\end{equation}
It follows that $d^c\delta^g+\delta^gd^c=0.$ Furthermore, since $\Lambda_\omega$ commutes with $J$, the relation \ref{eq20} implies \cite{gau}
\begin{equation}\label{eq21}
[\Lambda_\omega,d]=-\delta^c.
\end{equation}

Moreover, if $L_\omega$ is the adjoint of $\Lambda_\omega$ acting on a $p$-form $\psi$ by $L_\omega\psi=\omega\wedge\psi$, then \cite{gau} 
\begin{equation}\label{eq12}
[L_\omega,\delta^g]=d^c.
\end{equation}

%
Now, we are in position to derive a generalization of the the $dd^c$-Lemma. 
\begin{lemma}\label{l1}
On a compact almost-K\"ahler manifold, let $\psi$ be any $J$-invariant $p$-form satisfying $\psi=d\phi$ for some $(p-1)$ form $\phi$. Then,
\begin{eqnarray*}
\psi=d\mathbb{G}d^c\tilde{\psi}=\mathbb{G}dd^c\tilde{\psi},
\end{eqnarray*}
for some $(p-2)$-form $\tilde{\psi}$.
\end{lemma}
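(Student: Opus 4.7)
The plan is to combine the Riemannian Hodge decomposition of $\psi$ with the symplectic commutator \eqref{eq20} in order to rewrite $\delta^g\psi$ as a $d^c$-exact form; the unknown $(p-2)$-form $\tilde\psi$ will turn out to be, up to a sign, simply $\Lambda_\omega\psi$.

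First I would observe that since $\psi=d\phi$ is $d$-exact, its harmonic part vanishes, and the identity $d\mathbb{G}\psi=\mathbb{G}d\psi=0$ (using that $d$ commutes with $\mathbb{G}$) annihilates the co-exact summand $\delta^g d\mathbb{G}\psi$. The standard Hodge decomposition therefore collapses to
\begin{equation*}
\psi=d\,\delta^g\,\mathbb{G}\psi.
\end{equation*}
Next, the $J$-invariance $J\psi=\psi$ and the definition $d^c=(-1)^p JdJ$ give, using that $\psi$ is automatically $d$-closed,
\begin{equation*}
d^c\psi=(-1)^p J\,d(J\psi)=(-1)^p J\,d\psi=0.
\end{equation*}
Applying the commutator $[\Lambda_\omega,d^c]=\delta^g$ of \eqref{eq20} to $\psi$ and using $d^c\psi=0$ yields
\begin{equation*}
\delta^g\psi=-\,d^c\Lambda_\omega\psi.
\end{equation*}
Substituting into the Hodge formula, and using that $\mathbb{G}$ commutes with both $\delta^g$ and $d$,
\begin{equation*}
\psi=d\,\delta^g\,\mathbb{G}\psi=d\,\mathbb{G}\,\delta^g\psi=-\,d\,\mathbb{G}\,d^c\,\Lambda_\omega\psi=-\,\mathbb{G}\,d\,d^c\,\Lambda_\omega\psi,
\end{equation*}
so $\tilde\psi:=-\Lambda_\omega\psi$ realizes both claimed equalities simultaneously.

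There is no genuine analytic obstacle here: the entire argument reduces to the symplectic commutator identities already recalled in the preceding subsection together with the mapping properties of $\mathbb{G}$. The key conceptual observation is that $J$-invariance of $\psi$ is exactly the hypothesis needed to force $d^c\psi=0$, after which $[\Lambda_\omega,d^c]=\delta^g$ furnishes the bridge that expresses $\delta^g\psi$ itself as a $d^c$-exact form. In contrast with the classical K\"ahler $dd^c$-lemma, the Green operator $\mathbb{G}$ cannot be dropped, since $d$ and $d^c$ need not anticommute in the almost-K\"ahler setting; this is precisely the role it plays in the formula above.
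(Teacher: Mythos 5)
Your proof is correct, and it takes a genuinely shorter route than the paper's. The paper, after the same first step $\psi=d\delta^g\mathbb{G}\psi$, inserts the Hodge decomposition of $\psi$ with respect to the \emph{twisted} Laplacian $\Delta^c$ (so it needs $\mathbb{G}^c$ and the harmonic part $(\psi)_{H^c}$), then applies $[\Lambda_\omega,d^c]=\delta^g$ only to $(\psi)_{H^c}$ and uses the anticommutation $d^c\delta^g+\delta^gd^c=0$ to push $\delta^g$ past $d^c$ in the remaining term, ending with $\tilde{\psi}=-\Lambda_\omega\big((\psi)_{H^c}\big)-\delta^g\delta^c\mathbb{G}^c\psi$. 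You instead observe that $d^c\psi=0$ holds for $\psi$ itself, so the commutator identity applied directly to $\psi$ gives $\delta^g\psi=-d^c\Lambda_\omega\psi$, and the commutation of $d$ and $\delta^g$ with $\mathbb{G}$ finishes the argument with the explicit and cleaner primitive $\tilde{\psi}=-\Lambda_\omega\psi$; no twisted Hodge theory or $\mathbb{G}^c$ is needed. What the paper's longer route buys is mainly notational alignment with the $\Delta^c$-decomposition it uses again in Corollary \ref{c2}; your version buys an explicit formula for $\tilde{\psi}$ and fewer operators, and it yields the same conclusion (uniqueness in Proposition \ref{p2} is proved separately anyway). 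One small caveat: your closing remark attributes the necessity of $\mathbb{G}$ to the failure of $dd^c+d^cd=0$; the precise obstruction to writing $\psi=dd^c(\mathbb{G}\tilde{\psi})$ as in the K\"ahler case is rather that $\mathbb{G}$ (the Green operator of $\Delta^g$) need not commute with $d^c$ when $\Delta^g\neq\Delta^c$ — but this is a side comment and does not affect the proof.
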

\begin{proof}
It follows from the Hodge decomposition with respect to $\Delta^g$ of $\psi$ and since $d\psi=0$ that 
\begin{equation}\label{eq41}
\psi=(\psi)_H+d\delta^g\mathbb{G}\psi=d\delta^g\mathbb{G}\psi.
\end{equation}
Recall that $(\psi)_H$=0 because $\psi$ is $d$-exact.

On the other hand, $d^c\psi=0$ since $\psi$ is $J$-invariant. So, it follows from the Hodge decomposition with respect to $\Delta^c$ that $\psi=(\psi)_{H^c}+d^c\delta^c\mathbb{G}^c\psi$ (here $(\psi)_{H^c}$ denotes the harmonic part with respect to $\Delta^c$, in particular $d^c(\psi)_{H^c}=\delta^c(\psi)_{H^c}=0$). Plugging this in \ref{eq41}, we obtain 
\begin{eqnarray*}
\psi&=&d\delta^g\mathbb{G}\Big{(}\big{(}(\psi)_{H^c}\big{)}+d^c\delta^c\mathbb{G}^c\psi\Big{)}, \\
&=&d\delta^g\mathbb{G}\big{(}(\psi)_{H^c}\big{)}+d\delta^g\mathbb{G}(d^c\delta^c\mathbb{G}^c\psi), \\
&=&d\mathbb{G}\delta^g\big{(}(\psi)_{H^c}\big{)}+d\mathbb{G}\delta^g(d^c\delta^c\mathbb{G}^c\psi),\\
&=&d\mathbb{G}[\Lambda_\omega,d^c]\big{(}(\psi)_{H^c}\big{)}-d\mathbb{G}d^c(\delta^g\delta^c\mathbb{G}^c\psi),\\
&=&-d\mathbb{G}d^c\Lambda_\omega((\psi)_{H^c})-d\mathbb{G}d^c(\delta^g\delta^c\mathbb{G}^c\psi),\\
&=&d\mathbb{G}d^c\Big{(}-\Lambda_\omega\big{(}(\psi)_{H^c}\big{)}-\delta^g\delta^c\mathbb{G}^c\psi\Big{)}.
\end{eqnarray*}

Here, we used the equality \ref{eq20} and the fact that $d^c\delta^g+\delta^gd^c=0$. The Lemma follows.
\end{proof}
In the K\"ahler case, remark that  $\Delta=\Delta^c$ so
$\mathbb{G}=\mathbb{G}^c$. Hence, $d^c\mathbb{G}=\mathbb{G}d^c$. Then, $\psi=d\mathbb{G}d^c\tilde{\psi}=dd^c(\mathbb{G}\tilde{\psi}).$

\begin{proposition}\label{p2}
On a compact almost-K\"ahler manifold, let $\psi_1,\psi_2$ be any two real $J$-invariant closed $2$-forms and suppose that $\psi_1,\psi_2$ determine the same deRham cohomology class. Then, there exists a real function $f$, uniquely defined up to an additive constant, such that
\begin{equation*}
\psi_1-\psi_2=d\mathbb{G}d^cf=\mathbb{G}dd^cf.
\end{equation*}
\end{proposition}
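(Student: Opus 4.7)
\medskip

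\noindent\textbf{Proof proposal.} My plan is to reduce the statement to a direct application of Lemma~\ref{l1}. The $2$-form $\psi := \psi_1 - \psi_2$ is real and $J$-invariant (being the difference of two real $J$-invariant forms), and by hypothesis it is $d$-exact since $\psi_1$ and $\psi_2$ represent the same deRham class. Applying Lemma~\ref{l1} with $p=2$ yields a $(p-2)=0$-form, i.e.\ a function $f$, such that
\begin{equation*}
\psi_1-\psi_2 \;=\; d\mathbb{G}d^{c}f \;=\; \mathbb{G}dd^{c}f.
\end{equation*}
The explicit recipe $f=-\Lambda_\omega\bigl((\psi)_{H^c}\bigr)-\delta^g\delta^c\mathbb{G}^c\psi$ produced at the end of the proof of Lemma~\ref{l1} involves only real operators ($\Lambda_\omega$, $\delta^g$, $\delta^c$, $\mathbb{G}^c$, and the $\Delta^c$-harmonic projection), hence $f$ is automatically real.

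For the uniqueness statement, I would argue as follows. Suppose $f$ is a real function with $d\mathbb{G}d^{c}f=0$. Since $d$ commutes with $\mathbb{G}$, this is the same as $\mathbb{G}dd^{c}f=0$, so $dd^{c}f$ lies in the kernel of $\mathbb{G}$, i.e.\ it is $\Delta^g$-harmonic. But $dd^{c}f$ is also $d$-exact, and the Hodge decomposition forces any exact harmonic form to vanish; hence $dd^{c}f=0$.

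To pass from $dd^{c}f=0$ to $f$ constant I would contract with $\omega$. Using \eqref{eq21} applied to the $1$-form $d^{c}f$, together with $\Lambda_\omega(d^cf)=0$, we get $\Lambda_\omega(dd^{c}f)=-\delta^{c}d^{c}f$. On functions a short computation from $\delta^{c}=(-1)^{p}J\delta^{g}J$ and $d^{c}f=Jdf$ gives $\delta^{c}d^{c}f=\Delta^{g}f$, so $\Lambda_\omega(dd^{c}f)=-\Delta^{g}f$. Therefore $dd^{c}f=0$ implies $\Delta^{g}f=0$, and compactness forces $f$ to be constant.

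The main step of the argument is really the preceding Lemma~\ref{l1}; once that is in hand, the only genuinely new input needed here is the identity $\Lambda_\omega dd^{c}f=-\Delta^{g}f$ for the uniqueness part, which is the point where one has to verify that no Nijenhuis-type obstruction spoils the Kähler identities when they are applied to functions. Everything else is bookkeeping.
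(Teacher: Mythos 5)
Your proof is correct and takes essentially the same route as the paper: existence is the same direct application of Lemma~\ref{l1} to $\psi=\psi_1-\psi_2$, and your uniqueness argument ($\mathbb{G}dd^cf=0$ forces $dd^cf$ to be both exact and harmonic, hence zero, whence $\Delta^g f=0$ and $f$ is constant by compactness) matches the paper's. The only cosmetic difference is that you contract using \eqref{eq21} applied to $d^cf$ (giving $\Lambda_\omega dd^cf=-\Delta^g f$), whereas the paper uses \eqref{eq20} applied to $df$; the two computations are equivalent.
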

\begin{proof}
This is a direct application of Lemma \ref{l1} for $\psi=\psi_1-\psi_2$. If $\mathbb{G}dd^cf=0$ then $dd^cf$ is harmonic. Since $M$ is compact, $dd^cf=0$. By the equality \ref{eq20}, it follows that $\Delta^gf=\delta^gdf=[\Lambda_\omega, d^c]df=\Lambda_\omega d^cdf=0$ (because $d^cdf=-Jdd^cf$). So $f$ is constant as $M$ is compact.
\end{proof}

As a consequence of Lemma \ref{l1}, we obtain a Hodge decomposition of the Riemannian dual of a (real) holomorphic vector field on a compact almost-K\"ahler manifold $(M,\omega,J,g)$. Recall that a (real) vector field $X$ is called holomorphic
if it is an infinitesimal isometry of $J$ i.e. $\mathfrak{L}_XJ=0,$ where $\mathfrak{L}$ is the Lie derivative.

\begin{corollary}\label{c2}
Let $X$ be a holomorphic vector field on a compact almost-K\"ahler manifold and $\xi=X^{\flat_g}$ the dual of $X$ with respect to the metric $g.$ Then, we have
\begin{eqnarray}
\xi=(\xi)_{H^c}+d^cu-J\mathbb{G}d^cv,
\end{eqnarray}
where $u,v$ are real functions, uniquely defined up to an additive constant. Here $(\xi)_{H^c}$ denotes the harmonic part with respect to $\Delta^c.$
\end{corollary}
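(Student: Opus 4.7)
The plan is to reduce the question to the generalized $dd^c$-Lemma (Lemma \ref{l1}) by passing from $\xi$ to $J\xi$. A direct check using $g = \omega(\cdot, J\cdot)$ yields $J\xi = \iota_X\omega$, so $dJ\xi = \mathfrak{L}_X\omega$ because $\omega$ is closed. Next I would check $J$-invariance of $\mathfrak{L}_X\omega$: since $X$ is holomorphic, one has $(\mathfrak{L}_X g)(Y,Z) = (\mathfrak{L}_X\omega)(Y, JZ)$, and the automatic symmetry of $\mathfrak{L}_X g$ translates directly into $J$-invariance of $\mathfrak{L}_X\omega$. Thus $dJ\xi$ is a $J$-invariant exact $2$-form, and Lemma \ref{l1} supplies a function $v$ with $dJ\xi = d\mathbb{G}d^c v$, so that $J\xi - \mathbb{G}d^c v$ is $d$-closed.

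Applying the classical Hodge decomposition to this closed $1$-form I would write
\begin{equation*}
J\xi - \mathbb{G}d^c v = h + df,
\end{equation*}
with $h$ a $\Delta^g$-harmonic $1$-form. Since $J^2 = -\mathrm{Id}$ on $1$-forms and $Jdf = d^c f$ on functions, applying $-J$ and setting $u := -f$ yields the desired shape
\begin{equation*}
\xi = -Jh + d^c u - J\mathbb{G}d^c v.
\end{equation*}

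The remaining substantive point is to identify $-Jh$ with the $\Delta^c$-harmonic component $(\xi)_{H^c}$. Using the explicit formulas $d^c\alpha = -JdJ\alpha$ and $\delta^c\alpha = -\delta^g J\alpha$ on $1$-forms, a short calculation shows that $J$ interchanges $\ker\Delta^g$ and $\ker\Delta^c$ on $1$-forms; in particular $-Jh$ is $\Delta^c$-harmonic. If $h'$ is any $\Delta^c$-harmonic $1$-form, then $Jh'$ is $\Delta^g$-harmonic and therefore killed by $\mathbb{G}$, so using the skew-adjointness $J^{\ast} = -J$ on $1$-forms one finds $\langle -J\mathbb{G}d^c v, h'\rangle = \langle d^c v, \mathbb{G}Jh'\rangle = 0$, and also $\langle d^c u, h'\rangle = 0$. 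Uniqueness of the $\Delta^c$-Hodge decomposition then forces $(\xi)_{H^c} = -Jh$. I expect this cross-check between the two Hodge theories, mediated by $J$, to be the main subtlety of the argument.

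Finally, for uniqueness of $u$ and $v$ up to additive constants, I would reverse the construction: for any candidate decomposition of the stated form, applying $d$ after $J$ recovers $dJ\xi = d\mathbb{G}d^c v$, which by Proposition \ref{p2} pins down $v$ modulo a constant; uniqueness of $u = -f$ modulo a constant then follows from the uniqueness in the classical Hodge decomposition of the closed $1$-form $J\xi - \mathbb{G}d^c v$.
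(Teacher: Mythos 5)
Your argument is correct, and after the common first step it diverges from the paper's in a genuine way. Both proofs begin identically: $J\xi=\iota_X\omega$, so $dJ\xi=\mathfrak{L}_X\omega$ is an exact $J$-invariant $2$-form (holomorphy of $X$ plus symmetry of $\mathfrak{L}_Xg$), and Lemma \ref{l1} produces $v$ with $dJ\xi=d\mathbb{G}d^cv$. From there the paper stays with the twisted Laplacian: it writes the $\Delta^c$-Hodge decomposition $\xi=(\xi)_{H^c}+d^cu+\delta^c\phi$ directly, computes $dJ\xi=-d\delta^gJ\phi$, and then identifies $\delta^c\phi=-J\mathbb{G}d^cv$ by converting $d^cv=-\delta^g(v\omega)$ via the commutator \eqref{eq12}, commuting $\mathbb{G}$ past $\delta^g$, and cancelling $d$ on coexact $1$-forms; the harmonic and $d^c$-exact pieces come for free. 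You instead pass to the closed $1$-form $J\xi-\mathbb{G}d^cv$, use the ordinary $\Delta^g$-Hodge decomposition, and transport back with $-J$; the price is that you must then recognize $-Jh$ as $(\xi)_{H^c}$, which you do correctly by noting that $J$ intertwines $\Delta^g$- and $\Delta^c$-harmonic $1$-forms and that $d^cu$ and $-J\mathbb{G}d^cv$ are $L^2$-orthogonal to $\ker\Delta^c$ (using $J^\ast=-J$ on $1$-forms and $\mathbb{G}$ annihilating $\Delta^g$-harmonics), so the harmonic projection picks out $-Jh$. Your route avoids the identity \eqref{eq12} altogether and leans only on Lemma \ref{l1} plus classical Hodge theory and the $J$-conjugation between the two harmonic spaces, at the cost of this extra identification step; the paper's route is shorter once \eqref{eq12} is available because it never leaves the $\Delta^c$-decomposition of $\xi$. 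Your uniqueness argument (recover $dJ\xi=d\mathbb{G}d^cv$, apply the uniqueness in Proposition \ref{p2} for $v$, then uniqueness of the exact part of the closed $1$-form for $u$) is also sound and parallels what the paper leaves implicit.
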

Remark that in the K\"ahler case, $(\xi)_{H^c}=\xi_H$ and $-J\mathbb{G}d^cv=d(\mathbb{G}v).$ 
\begin{proof}
Since $X$ is holomorphic, a direct computation shows that $\mathfrak{L}_X\omega$ is a $J$-invariant $2$-form.
Hence, $dJ\xi$ is $J$-invariant. By Lemma \ref{l1}, $dJ\xi=d\mathbb{G}d^cv,$ for a function $v$ uniquely defined up to a constant.
The Hodge decomposition with respect to $\Delta^c$ of $\xi$ is given by $\xi=(\xi)_{H^c}+d^cu+\delta^c\phi$ for some real function $u$ and $2$-form $\phi$. Then 
\begin{equation}\label{eq14}
dJ\xi=-d\delta^gJ\phi=d\mathbb{G}d^cv. 
\end{equation}
Moreover, using equality \ref{eq12}, we have $d\mathbb{G}d^cv=-d\mathbb{G}\delta^g(v\omega)=-d\delta^g\mathbb{G}(v\omega)$. So, from \ref{eq14} we have $-d\delta^gJ\phi=-d\delta^g\mathbb{G}(v\omega)$, thus $-\delta^gJ\phi=-\delta^g\mathbb{G}(v\omega)=\mathbb{G}d^cv$. The Corollary follows.
\end{proof}

Now, given any function $f$, it is natural to wonder whether $d\mathbb{G}d^cf$ is $J$-invariant.
\begin{proposition}\label{p3}
In dimension $2n=4$, for any smooth function $f$, 
\begin{equation*}
(d\mathbb{G}d^cf)^{J,-}=\frac{1}{2} (f_0\omega)_H-\frac{1}{4}g\left((f_0\omega)_H,\omega\right)\,\omega.
\end{equation*}
In particular, if $h^-_J=b^+-1$, then $d\mathbb{G}d^cf$ is $J$-invariant (here $f_0$ is the orthogonal projection of $f$ onto the complement of the constants).
\end{proposition}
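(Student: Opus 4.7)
The plan is to compute $(d\mathbb{G}d^{c}f)^{+}$ from the Hodge decomposition of the self-dual $2$-form $f_{0}\omega$, and then project off the fiberwise $\mathbb{R}\omega$-component using $\Lambda^{+}M=\mathbb{R}\omega\oplus\Lambda^{J,-}M$. Two dimension-$4$ ingredients drive the computation: first, $\omega$ being self-dual and closed gives $\delta^{g}\omega=0$, and since $\ast_{g}\omega=\omega$, applying $\delta^{g}=-\ast_{g}d\ast_{g}$ yields $\delta^{g}(f_{0}\omega)=-\ast_{g}(df\wedge\omega)$; second, the pointwise identity $\ast_{g}(\omega\wedge\alpha)=J\alpha$ valid on $1$-forms $\alpha$ gives $\ast_{g}(df\wedge\omega)=d^{c}f$, hence $\delta^{g}(f_{0}\omega)=-d^{c}f$ and symmetrically $\delta^{g}(df\wedge\omega)=-\ast_{g}d\,d^{c}f$.

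Inserting these into
\[
f_{0}\omega=(f_{0}\omega)_{H}+d\delta^{g}\mathbb{G}(f_{0}\omega)+\delta^{g}d\mathbb{G}(f_{0}\omega)
\]
and commuting $d,\delta^{g},\ast_{g}$ past $\mathbb{G}$, the two non-harmonic pieces become $-d\mathbb{G}d^{c}f$ and $-\ast_{g}(d\mathbb{G}d^{c}f)$, respectively (using that $d$ commutes with $\mathbb{G}$, so $\mathbb{G}dd^{c}f=d\mathbb{G}d^{c}f$). Rearranging gives
\[
d\mathbb{G}d^{c}f+\ast_{g}(d\mathbb{G}d^{c}f)=(f_{0}\omega)_{H}-f_{0}\omega,
\]
so $(d\mathbb{G}d^{c}f)^{+}=\frac{1}{2}\bigl((f_{0}\omega)_{H}-f_{0}\omega\bigr)$. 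The $J$-anti-invariant part is then the self-dual part minus $\frac{1}{2}g(\,\cdot\,,\omega)\,\omega$; using $g(\omega,\omega)=2$ and $g(f_{0}\omega,\omega)=2f_{0}$, the $f_{0}\omega$ contributions cancel and the stated formula drops out.

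For the \emph{in particular} claim, the hypothesis $h^{-}_{J}=b^{+}-1$ forces the space of self-dual harmonic forms to split as $\mathcal{H}^{+}=\mathbb{R}\omega\oplus\mathcal{H}^{J,-}$; since $f_{0}\omega$ is self-dual, $(f_{0}\omega)_{H}\in\mathcal{H}^{+}$, but $\langle f_{0}\omega,\omega\rangle_{L^{2}}=2\int_{M}f_{0}\,\mathrm{vol}=0$ and $\langle f_{0}\omega,\mu\rangle_{L^{2}}=0$ for any $\mu\in\mathcal{H}^{J,-}$ (because $\omega\perp\Lambda^{J,-}$ pointwise), so $(f_{0}\omega)_{H}=0$ and $d\mathbb{G}d^{c}f$ is $J$-invariant. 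The main obstacle is the displayed self-dual identity: it hinges on the $4$-dimensional duality $\ast_{g}(\omega\wedge\alpha)=J\alpha$, without which $\delta^{g}d\mathbb{G}(f_{0}\omega)$ cannot be rewritten as $-\ast_{g}(d\mathbb{G}d^{c}f)$ and the self-dual projection of $d\mathbb{G}d^{c}f$ cannot be read off directly from the Hodge decomposition of $f_{0}\omega$. The rest is routine commutator bookkeeping with $\mathbb{G}$.
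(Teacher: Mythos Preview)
Your proof is correct and essentially the same as the paper's. The only organizational difference is that the paper starts from $d\mathbb{G}d^{c}f=-\mathbb{G}d\delta^{g}(f\omega)$ (via the commutator $[L_{\omega},\delta^{g}]=d^{c}$, equivalent to your pointwise identity $\ast_{g}(\omega\wedge\alpha)=J\alpha$) and applies $\tfrac{1}{2}(I+\ast_{g})$ directly to obtain $-\tfrac{1}{2}\mathbb{G}\Delta^{g}(f\omega)$, whereas you start from the Hodge decomposition of $f_{0}\omega$ and identify the two non-harmonic pieces; both routes use the same identities and arrive at the same self-dual expression, and the treatment of the $h^{-}_{J}=b^{+}-1$ case is likewise the same.
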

\begin{proof}
Using the equality \ref{eq12} and the fact that the Hodge operator $\ast_g$ commutes with $\mathbb{G}$, we have
\begin{eqnarray*}
(d\mathbb{G}d^cf)^{J,-}&=&(-d\mathbb{G}\delta^g(f\omega))^{J,-}\\
&=&\frac{1}{2}(I+\ast_g)(-\mathbb{G}d\delta^g(f\omega))-\frac{1}{4}g\left((I+\ast_g)(-\mathbb{G}d\delta^g(f\omega)),\omega\right)\,\omega\\
&=&-\frac{1}{2}\mathbb{G}\Delta^g(f\omega)+\frac{1}{4}g\left(\mathbb{G}\Delta^g(f\omega),\omega\right)\,\omega\\
&=&-\frac{1}{2} f\omega+\frac{1}{2} (f\omega)_H+\frac{1}{2} f\omega-\frac{1}{4}g\left((f\omega)_H,\omega\right)\,\omega\\
&=&\frac{1}{2} (f\omega)_H-\frac{1}{4}g\left((f\omega)_H,\omega\right)\,\omega\\
&=&\frac{1}{2} (f_0\omega)_H-\frac{1}{4}g\left((f_0\omega)_H,\omega\right)\,\omega
\end{eqnarray*}
Here, we use the convention $g(\omega,\omega)=2$. In case when $h^-_J=b^+-1$, we have $(f_0\omega)_H=0$.
Indeed, under the latter assumption, for any $g$-harmonic $2$-form $\psi$ (with respect to $\Delta^g$), the pairing $g(\omega,\psi)$ is a constant function. Thus, given a function $f$, we obtain $<(f_0\omega)_H,\psi>_{L_2}=\int_Mf_0g(\omega,\psi)\frac{\omega^2}{2!}=0.$ Hence, $(f_0\omega)_H=0$ and therefore $d\mathbb{G}d^cf$ is $J$-invariant. 

\end{proof}

Thus, in dimension $4$, when $h_J^-=b^+-1$, the symplectic form $\omega+d\mathbb{G}d^cf$ is $J$-invariant for any function $f$ and so $f$ is called {\it almost-K\"ahler potential} when $(\omega+d\mathbb{G}d^cf,J)$ induces a Riemannian metric. Again remark that in the K\"ahler case, $d\mathbb{G}d^cf=dd^c\mathbb{G}f$, hence $\mathbb{G}f$ coincides with the usual K\"ahler potential.

The `potential' $\mathbb{G}f$ coincides with the potential defined by Weinkove \cite{wei} in the following way: Given a symplectic form $\tilde{\omega}$ compatible with $J$ and cohomologous to $\omega$, then $\tilde{\omega}-\omega=d\mathbb{G}d^cf$, for some function $f$. Now, let $\phi=\mathbb{G}f$ then a direct computation shows that $$d\mathbb{G}d^cf=d\mathbb{G}d^c\Delta^g\phi=dd^c\phi- 2d\delta^g\mathbb{G}D_{(d\phi)^{\sharp_g}}^g\omega.$$ 
The function $\phi$ corresponds to $\phi_0$ in the terminology of \cite{wei}.

\section{Proof of Theorem \ref{th2}}
Let $(M,\omega)$ be a $4$-dimensional compact and connected symplectic manifold.
Suppose that $J_0$ is an $\omega$-compatible almost-complex structure which induces a HEAK metric
with zero or negative Hermitian scalar curvature i.e. the Hermitian Ricci form $\rho^\nabla$ of $(\omega,J_0)$ satisfies $\rho^\nabla=\frac{s^\nabla}{4}\omega$ with $s^\nabla\leqslant 0.$
Moreover, suppose $h^-_{J_0}=b^+-1$, where $h^-_{J_0}$ is the dimension of $g_0$-harmonic $J_0$-anti-invariant $2$-forms \cite{dra-li-zha}. 
Let $J_t$ be a smooth family of $\omega$-compatible almost-complex structures in $AK_\omega$ such that $h^-_{J_t}=h^-_{J_0}=b^+-1$ for a small $t$. Denote by $g_t(\cdot,\cdot)=\omega(\cdot,J_t\cdot)$ the induced metric.

We consider the following almost-K\"ahler deformations
\medskip
\begin{equation*}
\omega_{t,f}=\omega+d\mathbb{G}_tJ_tdf,
\end{equation*}
where $\mathbb{G}_t$ is the Green operator associated to the Laplacian operator $\Delta^{g_t}$ with respect to the metric $g_t$ and $f\in{C}_0^\infty(M,\mathbb{R})$ a smooth function with zero mean value.

By Proposition \ref{p3}, the assumption $h^-_{J_t}=b^+-1$ implies that $\omega_{t,f}$ is $J_t$-invariant. Then, we define the map:

\begin{equation*}
\begin{array}{lrcl} \Phi : &\mathbb{R}\times{C}_0^\infty(M,\mathbb{R})& \longrightarrow &
{C}_0^\infty(M,\mathbb{R}) \\
    & (t,f)& \longmapsto
    &\mathring{s}^{\nabla_{t,f}},
\end{array}
\end{equation*}
\medskip
where $\mathring{s}^{\nabla_{t,f}}$ is the zero integral part of the Hermitian scalar curvature ${s}^{\nabla_{t,f}}$ of the almost-K\"ahler metric $(\omega_{t,f},J_{t})$. We have $\Phi(t,f)=0$ if and only if $(\omega_{t,f},J_{t})$ is an almost-K\"ahler metric with constant Hermitian scalar curvature. In particular, $\Phi(0,0)=0.$

Let ${W}^{p,k}$ be the completion of ${C}^\infty_0(M,\mathbb{R})$ with respect to the Sobolev norm $\|\cdot\|_{p,k}$ involving derivatives up to order $k$. Denote by $\Phi^{(p,k)}: \mathbb{R}\times{W}^{p,k+2}\longrightarrow{W}^{p,k}$ the extension of $\Phi$ to the Sobolev completion of ${C}^\infty_0(M,\mathbb{R})$. The map $\Phi^{(p,k)}$ is well defined when $pk>2n$. The kernel of the Laplacian $\Delta^{g_t}$ are $g_t$-harmonic $p$-forms and thus the dimension of the kernel of $\Delta^{g_t}$ is independent of $t$. Hence, we deduce from \cite[Theorem 7.6] {kod} that $\mathbb{G}_t$ is a $C^1$ map.
Thus, the map $\Phi^{(p,k)}$ is clearly a $C^1$ map.

Using the formula \ref{eq23} and definition of the Hermitan scalar curvature, we have the following

\begin{proposition}\label{pro5}
Let $(M,\omega,J,g)$ be a $4$-dimensional compact almost-K\"ahler manifold. Denote by $\mathbb{G}$ the Green operator associated to the Laplacian $\Delta^g$. Suppose that $d\mathbb{G}d^cf$ is $J$-invariant for any function $f$.
Then, for any almost-K\"ahler variation $\dot{\omega}=d\mathbb{G}d^c\dot{f}$ of the symplectic form $\omega$ ($\dot{f}$ with zero integral), the variation of the volume form, of the Hermitian Ricci form and the Hermitan scalar curvature are given by
\begin{eqnarray}
\dot{\big{(}{\omega^2}{}\big{)}}&=&(\delta^gJ\mathbb{G}d^c\dot{f})\,\,{\omega^2}{}=-\dot{f}\omega^2, \label{eq6}\\
\dot{\rho^{\nabla}}&=&\frac{1}{2}dd^c\dot{f}, \label{eq7}\\
\dot{s^{\nabla}}&=&-\Delta^g\dot{f}-2g(\rho^{\nabla},d\mathbb{G}d^c\dot{f}).\label{eq8}
\end{eqnarray}
\end{proposition}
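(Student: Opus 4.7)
My plan is to establish \ref{eq6}, \ref{eq7}, and \ref{eq8} in succession, each building on the previous. For \ref{eq6}, I would first note that $\dot{(\omega^2)}=2\dot\omega\wedge\omega$, and that on a $4$-manifold every $2$-form $\alpha$ satisfies $\alpha\wedge\omega=\tfrac12\Lambda_\omega(\alpha)\,\omega^2$ (the $J$-anti-invariant part and the primitive $J$-invariant part of $\alpha$ both annihilate $\omega$). So the task reduces to computing $\Lambda_\omega(d\mathbb{G}d^c\dot f)$. Applying $[\Lambda_\omega,d]=-\delta^c$ of \ref{eq21} to the $1$-form $\mathbb{G}d^c\dot f$ (on which $\Lambda_\omega$ vanishes) and using $\delta^c=-J\delta^g J$ on $1$-forms together with $J=\mathrm{Id}$ on functions yields $\Lambda_\omega(d\mathbb{G}d^c\dot f)=\delta^g J\mathbb{G}d^c\dot f$, which is the first equality of \ref{eq6}. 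For the second, the computation carried out inside the proof of Proposition \ref{p3} gives
\begin{equation*}
(d\mathbb{G}d^c\dot f)^+=\tfrac12(I+\ast_g)(d\mathbb{G}d^c\dot f)=-\tfrac12\mathbb{G}\Delta^g(\dot f\omega)=-\tfrac12\dot f\,\omega+\tfrac12(\dot f\omega)_H,
\end{equation*}
and under the standing hypothesis the integration argument of Proposition \ref{p3} forces $g(\omega,\psi)$ to be constant on each harmonic $2$-form $\psi$, so the zero mean of $\dot f$ yields $(\dot f\omega)_H=0$. Hence $(d\mathbb{G}d^c\dot f)^+=-\tfrac{\dot f}{2}\omega$, and since the remaining anti-self-dual component $(d\mathbb{G}d^c\dot f)^-$ lies in the trace-free subspace $\Lambda_0^{J,+}$, one reads off $\Lambda_\omega(d\mathbb{G}d^c\dot f)=-\dot f$.

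Identity \ref{eq7} then follows immediately: writing $\omega_{t,f}^{\,2}=e^{F_t}\omega^2$ for a smooth family of functions $F_t$ with $F_0=0$, the just-proven \ref{eq6} gives $\dot F=-\dot f$, so \ref{eq23} yields $\dot\rho^\nabla=-\tfrac12 dJd\dot F=\tfrac12 dJd\dot f=\tfrac12 dd^c\dot f$, using $d^c=Jd$ on functions.

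For \ref{eq8}, I would differentiate \ref{eq19} (in the form $s^\nabla\omega^2=4\,\rho^\nabla\wedge\omega$) and substitute \ref{eq6} and \ref{eq7} to obtain
\begin{equation*}
\dot s^\nabla\omega^2=s^\nabla\dot f\,\omega^2+2\,dd^c\dot f\wedge\omega+4\,\rho^\nabla\wedge d\mathbb{G}d^c\dot f.
\end{equation*}
The middle summand reduces to $-\Delta^g\dot f\,\omega^2$ via $\alpha\wedge\omega=\tfrac12\Lambda_\omega(\alpha)\omega^2$ combined with $\Lambda_\omega\,dd^c=-\Delta^g$ on functions (coming from \ref{eq21} and the equality $\Delta^c=\Delta^g$ on scalars). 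The last summand is evaluated by splitting the two $J$-invariant $2$-forms as $\rho^\nabla=\tfrac{s^\nabla}{4}\omega+\rho_0$ and $d\mathbb{G}d^c\dot f=-\tfrac{\dot f}{2}\omega+\beta_0$, with $\rho_0,\beta_0\in\Lambda^-M=\Lambda_0^{J,+}$, and applying the pointwise $4$-manifold identities $\alpha\wedge\beta=\bigl(g(\alpha^+,\beta^+)-g(\alpha^-,\beta^-)\bigr)\mathrm{vol}$ and $g(\alpha,\beta)=g(\alpha^+,\beta^+)+g(\alpha^-,\beta^-)$; the $s^\nabla\dot f$-cross terms cancel against the $s^\nabla\dot f\,\omega^2$ already present, and what remains is precisely $-2g(\rho^\nabla,d\mathbb{G}d^c\dot f)\,\omega^2$, giving \ref{eq8}. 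The only step demanding real care is the second equality in \ref{eq6}, where the $J$-invariance hypothesis enters essentially through Proposition \ref{p3}; everything else is routine bookkeeping with the Hodge algebra of $(\Lambda^2,\omega,J)$ in real dimension four together with the commutator identities \ref{eq20}--\ref{eq12}.
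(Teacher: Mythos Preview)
Your argument is correct, and for \ref{eq7} it coincides with the paper's; for \ref{eq8} the paper simply says ``is a consequence of \ref{eq19}'' whereas you spell out the $\Lambda^+/\Lambda^-$ bookkeeping, which is fine (your phrase ``cancel'' is slightly loose---the $s^\nabla\dot f$ pieces combine rather than annihilate---but the identity $s^\nabla\dot f\,\omega^2+4\,\rho^\nabla\wedge d\mathbb{G}d^c\dot f=-2g(\rho^\nabla,d\mathbb{G}d^c\dot f)\,\omega^2$ you are asserting is exactly right).

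The one genuine methodological difference is the second equality in \ref{eq6}. The paper does not compute $(d\mathbb{G}d^c\dot f)^+$; instead it shows directly that $d\bigl(\delta^gJ\mathbb{G}d^c\dot f\bigr)=-d\dot f$ via the chain
\[
d\delta^gJ\mathbb{G}d^c\dot f=-d\delta^c\mathbb{G}d^c\dot f=\delta^c d\mathbb{G}d^c\dot f=J\delta^g d\mathbb{G}d^c\dot f=J\Delta^g\mathbb{G}d^c\dot f=Jd^c\dot f=-d\dot f,
\]
using $d\delta^c+\delta^cd=0$ and the $J$-invariance of $d\mathbb{G}d^c\dot f$, and then concludes by the zero-mean condition. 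Your route---computing the self-dual projection and invoking $(\dot f\omega)_H=0$---is an equally valid alternative and has the small advantage that the decomposition $d\mathbb{G}d^c\dot f=-\tfrac{\dot f}{2}\omega+\beta_0$ you obtain is exactly what you reuse in \ref{eq8}. One caveat: you justify $(\dot f\omega)_H=0$ by appealing to ``the integration argument of Proposition~\ref{p3}'', but Proposition~\ref{p3} proves constancy of $g(\omega,\psi)$ under the hypothesis $h_J^-=b^+-1$, not under the weaker hypothesis of Proposition~\ref{pro5}. The fix is immediate: the $J$-invariance assumption forces $(d\mathbb{G}d^c\dot f)^+\in\mathbb{R}\,\omega$, so from your formula $(\dot f\omega)_H$ is a pointwise multiple of $\omega$; being closed this multiple is constant, and the zero mean of $\dot f$ makes it vanish. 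With that adjustment the argument is complete.
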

Remark that, in the K\"ahler case, if we substitute $\dot{f}$ by $\Delta^g{\dot{\phi}}$, then the above variations coincide with the variation $\dot{\omega}=dd^c\dot{\phi}$ of the K\"ahler form $\omega$ in the (fixed) K\"ahler class \cite{leb-sim-2}.
\begin{proof}
Let $\omega_t=\omega+td\mathbb{G}d^c\dot{f}$. Then, using the relation \ref{eq21}, $\dot{\big{(}{\omega^2}{}\big{)}}=\frac{d}{dt}(\omega_t)^2|_{t=0}=g(d\mathbb{G}d^c\dot{f},\omega)\,\,\omega^2=(\delta^gJ\mathbb{G}d^c\dot{f})\,\,{\omega^2}{}$.
Moreover, using the fact that $d\delta^c+\delta^cd=0$ and the $J$-invariance of $d\mathbb{G}d^c\dot{f}$, we have
\begin{eqnarray*}
d\delta^gJ\mathbb{G}d^c\dot{f}&=&-d\delta^c\mathbb{G}d^c\dot{f}\\
&=&\delta^cd\mathbb{G}d^c\dot{f}\\
&=&J\delta^gd\mathbb{G}d^c\dot{f}\\
&=&J\Delta^g\mathbb{G}d^c\dot{f}\\
&=&Jd^c\dot{f}=-d\dot{f}
\end{eqnarray*}

Since $\dot{f}$ has zero integral, we obtain the second equality in \ref{eq6}. The variation of the Hermitian Ricci form \ref{eq7} follows from \ref{eq23} while the expression \ref{eq8} is a consequence of \ref{eq19}.
\end{proof}

Since $(\omega,J_0)$ is HEAK and $g(d\mathbb{G}d^c\dot{f},\omega)=-\dot{f}$, it follows from Proposition \ref{pro5} that the partial derivative $\frac{\partial \Phi }{\partial f}|_{(0,0)}$ is given by
\begin{equation}\label{der1}
 \frac{\partial \Phi }{\partial f}|_{(0,0)}(\dot{f})=-\Delta^{g_0}\dot{f}+\frac{s^{\nabla}}{2}\dot{f},
\end{equation}
where $s^{\nabla}$ is the Hermitian scalar curvature $(\omega,J_0)$. Clearly, $ \frac{\partial \Phi }{\partial f}|_{(0,0)}$ is a self-adjoint elliptic linear operator. Furthermore, it is an isomorphism of ${C}^\infty_0(M,\mathbb{R}).$ Indeed, suppose that
$-\Delta^{g_0}\dot{f}+\frac{s^{\nabla}}{2}\dot{f}=0$ for a function $\dot{f}$ (with zero integral), then $\Delta^{g_0}\dot{f}=\frac{s^{\nabla}}{2}\dot{f}.$ By hypothesis, $s^{\nabla}\leqslant 0$. As $M$ is compact, $\dot{f}\equiv 0.$ 
The natural extension of $ \frac{\partial \Phi }{\partial f}|_{(0,0)}$ to ${W}^{p,k+2}$ is again an isomorphism from ${W}^{p,k+2}$ to ${W}^{p,k}$. It follows from the implicit function theorem for Banach spaces that there exists $\epsilon,\delta>0$
such that for $|t|<\epsilon$,  there exists $f_t$ satisfying $\|f_t\|_{p,k}<\delta$ such that $\;\Psi^{(p,k)}(t,f_t)=0$. Hence, for each $|t|<\epsilon$, $(\omega_{t,f_t},J_t)$ is an almost-K\"ahler metric with constant Hermitian scalar curvature
of regularity ${W}^{p,k+2}$. It follows from the bootstraping argument used in \cite{lej1} that $(\omega_{t,f_t},J_t)$ are a family of smooth almost-K\"ahler metrics with constant Hermitian scalar curvature. Theorem \ref{th2} follows from the Moser Lemma.

\begin{example}
Theorem \ref{th2} may be applied to the Kodaira--Thurston manifold given by $S^1\times(Nil^3/\Gamma)$ where 
\begin{equation*}
Nil^3=\left\{\left(\begin{array}{ccc}1 & x & z \\0 & 1 & y \\0 & 0 & 1\end{array}\right),\,\,\,x,y,z\in\mathbb{R}\right\},
\end{equation*}
and $\Gamma$ is the subgroup of $Nil^3$ consisting of elements with integral entries. The $1$-forms $dt,dx,dy,dz-xdy$ are invariant under the action of $\Gamma$ (here $t$ is the $S^1$ coordinate). 

By a result of Li and Tomassini \cite{li-tom}, any homogeneous almost-K\"ahler structure $(\omega,J)$ on $S^1\times(Nil^3/\Gamma)$
has $h^-_J=b^+-1=1.$ Namely, the following symplectic form
\begin{equation*}
\omega=dx\wedge dt+dy\wedge(dz-xdy)
\end{equation*}
and the non-integrable $\omega$-compatible almost-complex structure
\begin{equation*}
Jdx= dt,\,\,\,\,\,\,\,\,Jdy=(dz-xdy).
\end{equation*}
verifies $h^-_J=b^+-1$. Moreover, the Hermitian Ricci form $\rho^\nabla$ of $(\omega,J)$ is zero \cite{dra,li,vez}.
\end{example}

\begin{remark}
When the Hermitian scalar curvature is positive one can prove the following:
Suppose that $T$ is a maximal torus in $Ham(M,\omega)$. Let $J_t$ be any smooth family of $\omega$-compatible $T$-invariant almost-complex structure such that
$J_0$ induces a HEAK metric with positive Hermitian scalar curvature and close enough in $C^{\infty}$-topology to an integrable
$T$-invariant $\omega$-compatible almost-complex structure $J$. Moreover, suppose that for a small $t$, $h^-_{J_t}=h^-_{J_0}=b^+-1.$
Then, there exists a smooth family of $\omega$-compatible $T$-invariant almost-complex structures ${\tilde{J}}_t$ inducing almost-K\"ahler metrics with constant Hermitian scalar curvature such that
$\tilde{J}_t$ is diffeomorphic to $J_t$ and $\tilde{J}_0=J_0$.

Observe that our hypothesis here implies that $(M,J)$ is a Fano complex surface. Since $b^+=1$, the condition $h^-_{J_t}=b^+-1=0$ is automatically satisfied for any family $J_t$. By Tian result \cite{tia}, there exists a K\"ahler-Einstein metric
except in the first Hirzebruch surface and its blown up at one point (actually these two surfaces are toric). In the latter two cases, the Futaki invariant of the anti-canonical class being non-zero implies that there is no toric HEAK metric \cite{lej}.

\end{remark}

\subsection*{Acknowledgements} The author would like to thank Prof. T.-J. Li for many helpful conversations and advices; Prof. V. Apostolov for his hospitality during a week at UQ\`AM and for his encouragement and suggestions. 

\bibliographystyle{cdraifplain}
\bibliography{xampl}

\end{document}